\newtheorem{theorem}{Theorem}
\newtheorem{lemma}[theorem]{Lemma}
\newenvironment{proof}[1][Proof]{\textbf{#1.} }{\
\rule{0.5em}{0.5em}} \textheight 24cm \textwidth 16cm
\title{Rational Extensions of $C(X)$ via Hausdorff Continuous Functions}
\author{Roumen Anguelov\\
Department of Mathematics and Applied Mathematics\\
University of Pretoria, South Africa\\
roumen.anguelov@up.ac.za}
\date{}
\begin{document}

\maketitle

\begin{abstract}
The ring operations and the metric on $C(X)$ are extended to the
set $\mathbb{H}_{nf}(X)$ of all nearly finite Hausdorff continuous
interval valued functions and it is shown that
$\mathbb{H}_{nf}(X)$ is both rationally and topologically
complete. Hence, the rings of quotients of $C(X)$ as well as their
metric completions are represented as rings of Hausdorff
continuous functions.
\end{abstract}

{\it Mathematics Subject Classification (2000):} 54C30, 54C40,
13B02, 26E25

Key words: Hausdorff continuous, rational extension, rings of
quotients, rings of functions

\section{Introduction}
Let $B$ be a commutative ring with identity and let $A$ be a
subring of $B$ having the same identity. The ring $B$ is called a
\textit{rational extension} or a \textit{ring of quotients} of $A$
if for every $b\in B$ the subring $b^{-1}A=\{a\in A:ba\in A\}$ is
rationally dense in $B$, that is, $b^{-1}A$ does not have nonzero
annihilators. Any ring $A$ has a maximal rational extension
$\mathcal{Q}(A)$. The ring $\mathcal{Q}(A)$ is also called a
complete or total ring of quotients of $A$. The classical ring of
quotients
\[\mathcal{Q}_{cl}(A)=\left\{\frac{p}{q}:p,q\in A,\ q
\mbox{ is not a zero divisor}\right\}
\]
is, in general, a subring of $\mathcal{Q}(A)$. A ring without
proper rational extension is called \textit{rationally complete}.

It is well known that the ring $C(X)$ of all continuous real
functions on a topological space $X$ is not rationally complete.
Our goal is to represent the rational extensions of $C(X)$ as
rings of functions defined on the same domain, namely as rings of
Hausdorff continuous (H-continuous) functions on $X$. The
H-continuous functions are a special class of extended interval
valued functions, that is, their range, or target set, is
$\mathbb{I\overline{R}}=\{[\underline{a},\overline{a}]:\underline{a},
\overline{a}\in\overline{\mathbb{R}}=\mathbb{R}\cup\{\pm\infty\},\
\underline{a}\leq\overline{a}\}$. Due to a certain minimality
condition, they are not unlike the usual continuous functions. For
instance, they are completely determined by their values on any
dense subset of the domain. More precisely, for H-continuous
functions $f$, $g$ and a dense subset $D$ of $X$ we have
\begin{equation}\label{ident}
f(x)=g(x),\ x\in D\ \Longrightarrow f(x)=g(x),\ x\in X.
\end{equation}
Within the realm of Set-Valued Analysis, the H-continuous
functions can be identified with the minimal upper semi-continuous
compact set-valued (usco) maps from $X$ into
$\overline{\mathbb{R}}$, \cite{musco}.

We extend the ring structure on $C(X)$ to the set
$\mathbb{H}_{nf}(X)$ of all nearly finite H-continuous functions
following the approach in \cite{NMA2007}. We show further that the
ring $\mathbb{H}_{nf}(X)$ is rationally complete. Hence, it
contains all rational extensions of $C(X)$. The maximal and
classical rings of quotients are represented as subrings of
$\mathbb{H}_{nf}(X)$.

Let $||\cdot||$ denote the supremum norm on the set $C_{bd}(X)$ of
the bounded continuous functions. Then
\begin{equation}\label{rho}
\rho(f,g)=\left|\!\left|\frac{f-g}{1+|f-g|}\right|\!\right|
\end{equation}
is a metric on $C(X)$, which can be extended further to
$\mathcal{Q}(C(X))$. We prove that its completion
$\overline{\mathcal{Q}}(C(X))$ with respect to this metric is
exactly $\mathbb{H}_{nf}(X)$. The Dedekind completions $C(X)^\#$
and $C_{bd}(X)^\#$ of $C(X)$ and $C_{bd}(X)$ with respect to the
pointwise partial order, being subrings of $\mathcal{Q}(C(X))$,
also admit a convenient representation as rings of H-continuous
functions. These results significantly improve earlier results of
Fine, Gillman and Lambek \cite{FGL}, where the considered rings
are represented as direct limits of rings of continuous functions
on dense subsets of $X$ and $\beta X$.\vspace{3pt}

\section{The ring of nearly finite Hausdorff continuous functions}

We recall \cite{Sendov} that an interval function
$f:X\to\mathbb{I\overline{R}}$ is called \textit{S-continuous} if
its graph is a closed subset of $X\times\mathbb{\overline{R}}$. An
interval function $f:X\to\mathbb{I\overline{R}}$ is Hausdorff
continuous (H-continuous) if it is an S-continuous function which
is minimal with respect to inclusion, that is, if
$\varphi:\Omega\to\mathbb{I\overline{R}}$ is an S-continuous
function then $\varphi\subseteq f$ implies $\varphi=f$. Here the
inclusion is considered in a point-wise sense. We denote by
$\mathbb{H}(X)$ the set of H-continuous functions on $X$.

Given an interval
$a=[\underline{a},\overline{a}]\in\mathbb{I\overline{R}}$,
 \[
w(a)=\left\{
\begin{tabular}
[c]{lll}%
$\overline{a}-\underline{a}$ & if & $\underline{a},\overline{a}$ finite,\\
$+\infty$ & if & $\underline{a}<\overline{a}=+\infty$ or $\underline{a}%
=-\infty<\overline{a}$,\\
0 & if & $\underline{a}=\overline{a}=\pm\infty,$%
\end{tabular}
\right.
\]
is the width of $a$, while
$|a|=\max\{|\underline{a}|,|\overline{a}|\}$ is the modulus of
$a$. An interval $a$ is called proper interval, if $w(a)>0$ and
point interval, if $w(a)=0$. Identifying $a\in
\mathbb{\overline{R}}$ with the point interval $[a,a]\in
\mathbb{I\overline{R}}$, we consider $\mathbb{\overline{R}}$ as a
subset of $\mathbb{I\overline{R}}$. H-continuous functions are
similar to the usual real valued continuous real functions in that
they assume proper interval values only on a set of First Baire
category, that is, for every $f\in\mathbb{H}(X)$ the set
$W_f=\{x\in X : w(f(x))>0\}$ is countable union of closed and
nowhere dense set, \cite{QM2004}. Furthermore, $f$ is continuous
on $X\setminus W_f$. If $X$ is a Baire space, $X\setminus W_f$ is
also dense in $X$. Thus, in this case, $f$ is completely
determined by its point values. This approach is used for defining
linear space operations \cite{RC} and ring operations
\cite{NMA2007} for H-continuous functions. Here we do not make any
such assumption on $X$. Hence the approach is different.

For every S-continuous function $g$ we denote by $\langle
g\rangle$ the set of H-continuous functions contained in $g$, that
is,
\[ \langle g\rangle=\{f\in\mathbb{H}(\Omega):f\subseteq g\}.
\]
Identifying $\{f\}$ with $f$ we have $\langle f\rangle =f$
whenever $f$ is H-continuous. The S-continuous functions $g$ such
that the set $\langle g\rangle$ is a singleton, that is, it
contains only one function, play an important role in the sequel.
In analogy with the H-continuous functions, which are minimal
S-continuous functions, we call these functions
\textit{quasi-minimal S-continuous functions} \cite{musco}. The
following characterization of the quasi-minimal S-continuous
functions is useful.

\begin{theorem}\label{tqminchar}
Let $f$ be an S-continuous function on $X$. Then $f$ is quasi-
minimal S-continuous function if and only if for every
$\varepsilon>0$ the set
\[
W_{f,\varepsilon}=\{x\in X:w(f(x))\geq\varepsilon\}
\]
is closed and nowhere dense in $X$.
\end{theorem}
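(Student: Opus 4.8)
The plan is to prove the two implications separately, using three standard facts about the machinery underlying H-continuity (see \cite{Sendov}): (i) every S-continuous function contains at least one H-continuous function, so $\langle f\rangle$ is never empty; (ii) an interval function $g=[\underline g,\overline g]$ is H-continuous precisely when it is S-continuous with $\underline g=I(\overline g)$ and $\overline g=S(\underline g)$, where $I$ sends $\varphi$ to its largest lower semi-continuous minorant (the value at $x$ being the supremum over open $U\ni x$ of $\inf_{y\in U}\varphi(y)$) and $S$ is the dual upper Baire operator; and (iii) for lower semi-continuous $\varphi$ and dense $D\subseteq X$, $\sup_{U}\varphi=\sup_{U\cap D}\varphi$ for every open $U$, with the dual statement for upper semi-continuous functions. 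I would also note at the outset that for S-continuous $f$ the interval functions $[\underline f,S\underline f]$ and $[I\overline f,\overline f]$ are S-continuous and contained in $f$ (since $S\underline f\le\overline f$ and $\underline f\le I\overline f$), and that for finite-valued $f$ the function $w\circ f=\overline f-\underline f$ is upper semi-continuous, so $W_{f,\varepsilon}$ is then automatically closed and the real content of the condition is nowhere density.

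For the implication ``$\Leftarrow$'' I would proceed as follows. Assume every $W_{f,\varepsilon}$ is closed and nowhere dense, so each $D_n:=X\setminus W_{f,1/n}$ is open and dense. Take $g,h\in\langle f\rangle$ and $x_0\in X$; I claim $g(x_0)=h(x_0)$. On $D_n$ the interval $f(x)$ has width $<1/n$, so the subintervals $g(x),h(x)\subseteq f(x)$ have endpoints within $1/n$ of one another there. From $\underline g=I(\overline g)$ and (iii) applied to the upper semi-continuous $\overline g$ one gets $\underline g(x_0)=\sup_{U\ni x_0}\inf_{U\cap D_n}\overline g$, likewise for $h$; since $\overline g,\overline h$ differ by at most $1/n$ on $D_n$, this forces $|\underline g(x_0)-\underline h(x_0)|\le 1/n$, for every $n$, so $\underline g(x_0)=\underline h(x_0)$, and symmetrically $\overline g(x_0)=\overline h(x_0)$ via $\overline g=S(\underline g)$. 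Hence $\langle f\rangle$ is a singleton. It is essential here that $X$ need not be a Baire space: $X\setminus W_f=\bigcap_n D_n$ may fail to be dense, which is precisely why the argument runs through the individual $D_n$ and a limit, not through the assertion that H-continuous functions are determined on dense sets.

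For ``$\Rightarrow$'' I would argue by contraposition: if some $W_{f,\varepsilon_0}$ is not closed-and-nowhere-dense, I produce two distinct elements of $\langle f\rangle$. The subcase in which $W_{f,\varepsilon_0}$ is not closed I would handle by working at a point of $\overline{W_{f,\varepsilon_0}}\setminus W_{f,\varepsilon_0}$ and using a graph-closure argument to separate the H-continuous cores of $[\underline f,S\underline f]$ and $[I\overline f,\overline f]$. The principal subcase is $W_{f,\varepsilon_0}$ closed with nonempty interior $U$, so $w(f)\ge\varepsilon_0$ on $U$. Assume for contradiction $\langle f\rangle=\{g\}$. Since $[\underline f,S\underline f]\subseteq f$ is S-continuous, the H-continuous function it contains lies in $\langle f\rangle$, hence equals $g$, so $\overline g\le S(\underline f)$; a brief semicontinuity computation on the open set $U$ gives $S(\underline f)\le\overline f-\varepsilon_0$ there, whence $\underline g\le\overline g\le\overline f-\varepsilon_0$ on $U$. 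Put $\psi:=\underline g+\frac{\varepsilon_0}{2}\chi_U$, with $\chi_U$ the indicator of $U$; as $U$ is open, $\psi$ is lower semi-continuous, and the estimate gives $\underline f\le\psi\le\overline f$. Then $[\psi,S\psi]\subseteq f$ is S-continuous and contains an H-continuous $g'\in\langle f\rangle=\{g\}$; but $\underline{g'}\ge\psi>\underline g$ on $U$, contradicting $g'=g$. So $\langle f\rangle$ is not a singleton.

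The hardest point will be the interaction of infinite interval values with the absence of a Baire hypothesis on $X$. The non-Baire aspect is absorbed by the $D_n$ device above. Infinities are more delicate in ``$\Rightarrow$'': where $\overline f=+\infty$ or $\underline f=-\infty$ on part of $U$ the inequalities $\overline g\le\overline f-\varepsilon_0$ and $\underline f\le\psi\le\overline f$ say nothing, so the bump $\psi$ must be taken with a locally finite, variable height, or one first splits $U$ along the relatively closed sets $\{\overline f=+\infty\}$ and $\{\underline f=-\infty\}$ and treats the pieces separately; this is also the only place where the ``closed'' clause of the hypothesis is genuinely needed, since for $f$ with a non-negligible infinite part $w\circ f$ may fail to be upper semi-continuous. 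I would verify carefully at this stage whether an additional hypothesis on $X$ or on $f$ is implicitly in force, the fully general $\overline{\mathbb R}$-valued, non-Baire case being the one to watch.
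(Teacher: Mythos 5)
Your ``$\Leftarrow$'' argument is correct and genuinely different from the paper's. The paper argues by contraposition: two distinct $\phi,\psi\in\langle f\rangle$ have disjoint values at some point, and semicontinuity of their endpoints produces an open set on which $w(f(x))\geq\varepsilon$, so some $W_{f,\varepsilon}$ fails to be nowhere dense. You instead prove uniqueness in $\langle f\rangle$ directly from the Baire-operator characterization of H-continuity together with density of the sets $D_n=X\setminus W_{f,1/n}$; this is sound (granting the standard facts (i)--(iii) you quote) and it makes transparent why no Baire assumption on $X$ is needed. Your principal subcase of ``$\Rightarrow$'' ($W_{f,\varepsilon_0}$ closed with nonempty interior $U$) is in essence the paper's construction: the paper chooses an H-continuous $\phi\subseteq[\underline{f},\overline{f}-\varepsilon]$ on the open set and pairs it with $\phi+\varepsilon$, which is incidentally less sensitive to infinite endpoints than your bump $\psi=\underline{g}+\frac{\varepsilon_0}{2}\chi_U$, since both the inequality $\overline{g}\le\overline{f}-\varepsilon_0$ and the strictness of the bump degenerate where $\overline{f}=+\infty$ or $\underline{g}=-\infty$; that bookkeeping is acknowledged in your sketch but not carried out.

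The genuine gap is your other subcase of ``$\Rightarrow$'': when $W_{f,\varepsilon_0}$ is not closed you propose to extract two distinct members of $\langle f\rangle$ from a point of $\overline{W_{f,\varepsilon_0}}\setminus W_{f,\varepsilon_0}$. No argument of that kind can succeed, because the implication ``some $W_{f,\varepsilon}$ not closed $\Rightarrow f$ not quasi-minimal'' is false for extended-valued $f$. Take $X=[0,1]$, $f(0)=[+\infty,+\infty]$, $f(1/n)=[n,n+1]$ for $n\in\mathbb{N}$, and $f(x)=[1/x,1/x]$ at all other points. Then $\underline{f}$ is lower and $\overline{f}$ upper semi-continuous, so $f$ is S-continuous; every H-continuous $g\subseteq f$ coincides with $1/x$ on the dense set $(0,1]\setminus\{1/n:n\in\mathbb{N}\}$, hence by (\ref{ident}) equals the H-continuous function equal to $1/x$ on $(0,1]$ and $+\infty$ at $0$, so $\langle f\rangle$ is a singleton and $f$ is quasi-minimal; yet $W_{f,1}=\{1/n:n\in\mathbb{N}\}$ is not closed. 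So in this subcase there is nothing you could prove --- the ``closed'' clause of the equivalence is simply not recoverable without a finiteness restriction. The paper sidesteps the issue by asserting that $W_{f,\varepsilon}$ is automatically closed for every S-continuous $f$; as you correctly sense, that is true for finite-valued $f$ (where $w\circ f=\overline{f}-\underline{f}$ is upper semi-continuous) but the example above refutes it in general. Your closing suspicion that an additional hypothesis is implicitly in force is therefore exactly right, but as written the proposal does not close the ``$\Rightarrow$'' direction: the non-closed subcase is unprovable as stated, and the infinite-value repair in the interior subcase is only promised, not executed.
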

\begin{proof}
Let us assume that an S-continuous function $f$ is not
quasi-minimal. Then there exist H-continuous functions
$\phi=[\underline{\phi},\overline{\phi}]$ and
$\psi=[\underline{\psi},\overline{\psi}]$, $\phi\neq \psi$ such
that $\phi\subseteq f$ and $\psi\subseteq f$. Due to the
minimality property of H-continuous functions the set $\{x\in
X:\phi(x)\cap\psi(x)=\emptyset\}$ is open and dense subset of $X$.
Let $a\in X$ be such that $\phi(a)\cap\psi(a)=\emptyset$. Without
loss of generally we may assume that
$\overline{\psi}(a)<\underline{\phi}(a)$. Let
$\varepsilon=\frac{1}{3}(\underline{\phi}(a)-\overline{\psi}(a))$.
Using that $\overline{\phi}$ and $\underline{\phi}$ are
respectively upper semi-continuous and lower semi-continuous
functions, there exists an open neighborhood $V$ of $a$ such that
$\overline{\psi}(x)<\overline{\psi}(a)+\varepsilon<\underline{\phi}(a)-\varepsilon<\underline{\phi}(x)$
, $x\in V$. Then
\[
w(f)(x)\geq
\underline{\phi}(x)-\overline{\psi}(x)>\underline{\psi}(a)-\overline{\phi}(a)-2\varepsilon=\varepsilon,\
\ x\in V.
\]
Hence $V\subset W_{f,\varepsilon}$, which implies that
$W_{f,\varepsilon}$ is not nowhere dense. Therefore, if
$W_{f,\varepsilon}$ is nowhere dense for every $\varepsilon>0$
then $f$ is quasi-minimal.

Now we prove the inverse implication, that is, that for any
S-continuous quasi-minimal function $f$ and $\varepsilon>0$ the
set $W_{f,\varepsilon}$ is closed and nowhere dense. Assume the
opposite. Since for an S-continuous function $f$ the set
$W_{f,\varepsilon}$ is always closed, this means that there exists
an S-continuous function $f=[\underline{f},\overline{f}]$ and
$\varepsilon>0$ such that $W_{f,\varepsilon}$ is not nowhere
dense. Hence there exists an open set $V$ such that $V\subseteq
W_{f,\varepsilon}$. Then there exist an H-continuous functions
$\phi$ on $V$ such that $\phi(x)\subseteq
[\underline{f}(x),\overline{f}(x)-\varepsilon]$, $x\in V$. The we
have $\phi(x)+\varepsilon \subseteq
[\underline{f}(x),\overline{f}(x)-\varepsilon]$, $x\in V$. It is
easy to see that the functions $\phi$ and $\phi+\varepsilon$ can
both be extended from $V$ to the whole space $X$ so that they
belong to $\langle f\rangle$. Hence $f$ is not quasi-minimal.
\end{proof}

The familiar operations of addition and multiplication on the set
of real intervals is defined for $[\underline{a},\overline{a}],
[\underline{b},\overline{b}]\in\mathbb{I\,}{\mathbb{\overline{R}}}$
as follows:
\begin{eqnarray*}
&&\hspace{-4mm}[\underline{a},\overline{a}] +
[\underline{b},\overline{b}]\! =\! \{a+b:a\!\in\!
[\underline{a},\overline{a}], b\!\in\!
[\underline{b},\overline{b}]\}\!=\![\underline{a} +
\underline{b},\overline{a} + \overline{b}],
\\
&&\hspace{-4mm}[\underline{a}, \overline{a}]\!\times\!
[\underline{b},\overline{b}]\!=\! \{ab\!:\!a\!\in\!
[\underline{a},\overline{a}], b\!\in\!
[\underline{b},\overline{b}]\}\!=\!
[\min\{\underline{a}\underline{b},\underline{a}\overline{b},\overline{a}\underline{b},\overline{a}\overline{b}\},
\max\{\underline{a}\underline{b},\underline{a}\overline{b},\overline{a}\underline{b},\overline{a}\overline{b}\}],
\end{eqnarray*}
Ambiguities related to $\pm\infty$ are resolved in a way which
guarantees inclusion: \[ -\infty+(+\infty)=[-\infty,+\infty], \
0\times (+\infty)=[0,+\infty],\ 0\times(-\infty)=[-\infty,0]
\]

Point-wise operations for interval functions are defined in the
usual way:
\begin{equation}\label{f+g,fxg}
(f+g)(x)=f(x)+g(x)\ ,\ \ (f\times g)(x)=f(x)\times g(x)\ ,\ x\in
X.
\end{equation}

It is easy to see that the set of the S-continuous functions is
closed under the above point-wise operations while the set of
H-continuous functions is not. In earlier works by Markov, Sendov
and the author, see \cite{NMA2007}--\cite{Varna2005}, it was shown
that the algebraic operations on the set $\mathbb{H}_{ft}(X)$ of
all finite H-continuous function can be defined in such a way that
it is a linear space (the largest linear space of finite interval
functions) and a ring. As mentioned above, these results were
derived in the case when $X$ is a Baire space. Here we extend
these results in two ways:

(i) We assume that the domain $X$ is an arbitrary completely
regular topological space.

(ii) We consider the wider set $\mathbb{H}_{nf}$ of nearly finite
H-continuous functions.

These generalizations are motivated by the aim of the paper:
namely, constructing the rational extensions of $C(X)$ as rings of
functions defined on the same domain. More precisely, the problem
is considered in the same setting as in \cite{FGL}, that is, $X$ a
completely regular topological space. Furthermore, it is shown in
the sequel that the rational extensions of $C(X)$ and their metric
completions considered in \cite{FGL} cannot be all represented
within the realm of finite H-continuous function. Hence we need to
considered the larger set $\mathbb{H}_{nf}(X)$.

Let us recall that an H-continuous function $f$ is called {\it
nearly finite} if the set
\[
\Gamma_f=\{x\in X:-\infty\in f(x)\ or +\infty\in f(x)\}
\]
is closed and nowhere dense. The set $\mathbb{H}_{nf}(X)$ has
important applications in the Analysis of PDEs within the context
of the Order Completion Method, \cite{RosingerOber}. It turns out
that the solutions of large classes of systems of nonlinear PDEs
can be assimilated with nearly finite H-continuous functions, see
\cite{AngRos}, \cite{AngRos2}. The definition of the operations on
$\mathbb{H}_{nf}(X)$ are based on the following theorem.

\begin{theorem}\label{toperqcont}
For any $f,g\in\mathbb{H}_{nf}(X)$ the functions $f+g$ and
$f\times g$ are quasi-minimal S-continuous functions.
\end{theorem}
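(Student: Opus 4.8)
The plan is to use the characterization of quasi-minimal S-continuous functions established in Theorem~\ref{tqminchar}. Since $f,g\in\mathbb{H}_{nf}(X)$ are in particular S-continuous and the set of S-continuous functions is closed under the pointwise operations \eqref{f+g,fxg}, both $f+g$ and $f\times g$ are already S-continuous. Thus it suffices to show that for every $\varepsilon>0$ the set $W_{h,\varepsilon}=\{x\in X:w(h(x))\geq\varepsilon\}$ is nowhere dense, where $h$ is $f+g$ or $f\times g$; closedness comes for free from S-continuity. The first step is to control the width of the values. For addition this is immediate: $w((f+g)(x))=w(f(x))+w(g(x))$ on the set where all values are finite, so $W_{f+g,\varepsilon}\subseteq W_{f,\varepsilon/2}\cup W_{g,\varepsilon/2}\cup\Gamma_f\cup\Gamma_g$, a finite union of nowhere dense sets (the first two because $f,g$ are H-continuous hence quasi-minimal, the last two by near-finiteness), hence nowhere dense.

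For multiplication the width estimate is the technical heart. On the set where $f(x)=[\underline f(x),\overline f(x)]$ and $g(x)=[\underline g(x),\overline g(x)]$ are all finite, one has the standard interval-arithmetic bound $w((f\times g)(x))\leq |f(x)|\,w(g(x))+|g(x)|\,w(f(x))$ (or a similar product-rule-type inequality obtained by examining the four corner products). The obstacle is that $|f(x)|$ and $|g(x)|$ need not be bounded on $X$. To handle this I would fix $\varepsilon>0$, fix an arbitrary nonempty open set $U\subseteq X$, and exhibit a nonempty open $V\subseteq U$ disjoint from $W_{f\times g,\varepsilon}$. Since $\Gamma_f\cup\Gamma_g$ is nowhere dense, we may shrink $U$ so that $f,g$ take finite values on it; since $f,g$ are continuous off the nowhere-dense sets $W_f,W_g$ (countable unions of closed nowhere-dense sets), and $U\setminus(W_f\cup W_g)$ is still dense in $U$ when $U$ is taken inside a suitable open set, we may pick a point $a\in U$ at which both $f$ and $g$ are continuous (single-valued), then choose a smaller open neighbourhood $V$ of $a$ on which $|f|$ and $|g|$ are bounded by some constant $M$ (using upper/lower semicontinuity of the endpoint functions) and on which $w(f(x))<\varepsilon/(2M)$ and $w(g(x))<\varepsilon/(2M)$ (shrinking $V$ to avoid $W_{f,\varepsilon/(2M)}\cup W_{g,\varepsilon/(2M)}$, which is nowhere dense, exactly as in the proof of Theorem~\ref{tqminchar}). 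On this $V$ the corner-product estimate gives $w((f\times g)(x))<\varepsilon$, so $V\cap W_{f\times g,\varepsilon}=\emptyset$.

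The only remaining subtlety is the treatment of points where a factor attains $\pm\infty$: the ambiguity conventions $0\times(+\infty)=[0,+\infty]$ etc.\ mean the product can have infinite width even when one factor is the point interval $0$. But such points lie in $\Gamma_f\cup\Gamma_g$, which is nowhere dense, and in the argument above $V$ was already chosen inside the region where both functions are finite-valued; so these points contribute nothing to the closure of $W_{f\times g,\varepsilon}$. Since $\varepsilon>0$ and the open set $U$ were arbitrary, $W_{f\times g,\varepsilon}$ is nowhere dense for every $\varepsilon$, and Theorem~\ref{tqminchar} yields that $f\times g$ is quasi-minimal S-continuous. I expect the corner-product width inequality together with the bookkeeping of which nowhere-dense exceptional sets to discard (and verifying their union is again nowhere dense, using that a finite union of closed nowhere-dense sets is nowhere dense) to be the only place requiring care.
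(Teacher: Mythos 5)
Your overall strategy is the paper's: reduce everything to the characterization in Theorem~\ref{tqminchar}, use the width inequality $w(a\times b)\leq w(a)|b|+w(b)|a|$, and localize inside an arbitrary open set away from $\Gamma_f\cup\Gamma_g$, where $|f|$ and $|g|$ are locally bounded by semicontinuity. The paper runs the multiplication case as a proof by contradiction and omits the (easier) addition case, which you treat directly via $W_{f+g,\varepsilon}\subseteq W_{f,\varepsilon/2}\cup W_{g,\varepsilon/2}\cup\Gamma_f\cup\Gamma_g$; those differences are cosmetic. The one genuine problem is the step where you insist on picking a point $a$ at which $f$ and $g$ are \emph{continuous (single-valued)}, justified by the claim that $U\setminus(W_f\cup W_g)$ is dense in $U$. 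The sets $W_f,W_g$ are only countable unions of closed nowhere dense sets, so that density claim is a Baire-category argument. The paper explicitly works on an arbitrary completely regular $X$ that is \emph{not} assumed to be a Baire space (this is precisely the generalization announced before the theorem), and on a non-Baire space $W_f$ may fail to have dense complement --- it can even be all of $X$ --- so ``pick a continuity point'' can be impossible; the hedge ``when $U$ is taken inside a suitable open set'' does not repair this.

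Fortunately the step is superfluous: single-valuedness of $f(a)$ and $g(a)$ is never used afterwards. All you need is that $|f(a)|$ and $|g(a)|$ are finite, i.e. $a\notin\Gamma_f\cup\Gamma_g$, which you have already arranged by shrinking $U$ off the closed nowhere dense set $\Gamma_f\cup\Gamma_g$; then upper semicontinuity of $|f|=\max\{\overline{f},-\underline{f}\}$ and of $|g|$ gives the bound $M$ on a neighbourhood $V$ of $a$, and removing the closed nowhere dense set $W_{f,\varepsilon/(2M)}\cup W_{g,\varepsilon/(2M)}$ from $V$ finishes exactly as you describe (note, as you implicitly allow, that the resulting open subset need not contain $a$, which is harmless since the bound $M$ holds on all of $V$). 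With that correction your argument coincides in substance with the paper's proof, which chooses $a\in V\setminus(\Gamma_f\cup\Gamma_g)$ and invokes upper semicontinuity of $|f|$, $|g|$ directly, never requiring points of single-valuedness.
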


\begin{proof}
The proofs for $f+g$ and $f\times g$ use similar ideas based on
Theorem \ref{tqminchar}. We will present only the proof for
$f\times g$ which is slightly more technical. Assume the opposite.
In view of Theorem \ref{tqminchar}, this means that there exists
$\varepsilon>0$ and an open set $V$ such that $V\subset W_{f\times
g,\varepsilon}(f)$. Furthermore, since $f$ and $g$ are nearly
finite, the set $V\setminus(\Gamma(f)\cup\Gamma(g))$ is also open
and nonempty. Let $a\in V\setminus(\Gamma(f)\cup\Gamma(g))$. It is
easy to see that the functions $|f|$ defined by $|f|(x)=|f(x)|$,
$x\in X$, is an upper semi-continuous function. Therefore, there
exists an open set $D_1$ such that $a\in D_1\subset V
\setminus(\Gamma(f)\cup\Gamma(g))$ and $|f(x)|<|f(a)|+1$, $x\in
D_1$. Similarly, there exists an open set $D_2$ such that $a\in
D_2\subset V \setminus(\Gamma(f)\cup\Gamma(g))$ and
$|g(x)|<|g(a)|+1$, $x\in D_2$. Denote $D=D_1\cap D_2$. Then, using
a well known inequality about the width of a product of intervals,
see \cite{Alefeld}, for every $x\in D$ we obtain
\begin{eqnarray*}
\varepsilon&\leq& w((f\times g)(x))=w(f(x)\times g(x))\ \leq\
w(f(x))|g(x)|+w(g(x))|f(x)|\\
&\leq& w(f(x))(|g(a)|+1)+w(g(x))(|f(a)|+1)\ \leq \
(w(f(x))+w(g(x))m,
\end{eqnarray*}
where $m=\max\{|f(a)|+1~,~|g(a)|+1\}$. This implies
\[
D \subset W_{f,\frac{\varepsilon}{2m}}\cup
W_{g,\frac{\varepsilon}{2m}}.
\]
Therefore, at least one of the sets $W_{f,\frac{\varepsilon}{2m}}$
and $W_{g,\frac{\varepsilon}{2m}}$ has a nonempty open subset.
However, by Theorem \ref{tqminchar} this is impossible. The
obtained contradiction completes the proof.
\end{proof}

Theorem \ref{toperqcont} implies that for every
$f,g\in\mathbb{H}_{nf}(X)$ the sets $[f+g]$ and $[f\times g]$
contain one element each. Then we define addition and
multiplication on $\mathbb{H}_{nf}(X)$ by
\begin{eqnarray}
f\oplus g = [f+g]\label{defadd}\\
f\otimes g= [f\times g]\label{defmult}
\end{eqnarray}
Here and in the sequel we denote the operations by $\oplus$ and
$\otimes$ to distinguish them from the point-wise operations
denoted earlier by $+$ and $\times$. Equivalently to
(\ref{defadd})--(\ref{defmult}), one can say that $f\oplus g$ is
the unique H-continuous function contained in $f+g$ and that
$f\otimes g$ is the unique H-continuous function contained in
$f\times g$. It should be noted that the operations $\oplus$ and
$\otimes$ may coincide with $+$ and $\times$ respectively for some
values of the arguments. In particular,
\begin{equation}\label{opercoincide}
f\oplus g=f+g,\ f\otimes g=f\times g\ \mbox{ for }\ f\in C(X),\
g\in\mathbb{H}_{nf}(X).
\end{equation}

\begin{theorem}
The set $\mathbb{H}_{nf}(X)$ is a commutative ring with respect to
the operations $\oplus$ and $\otimes$.
\end{theorem}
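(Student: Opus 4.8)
The plan is to verify the ring axioms by repeatedly invoking one simple observation: if $\phi$ and $\psi$ are H-continuous functions both contained, point-wise, in one and the same quasi-minimal S-continuous function $g$, then $\phi=\psi$, because $\langle g\rangle$ is then a singleton containing both. Accordingly, almost every axiom will be reduced to exhibiting the two sides of the required identity as H-continuous sub-functions of a common quasi-minimal S-continuous function. The one ingredient this needs beyond the excerpt is the following extension of Theorem~\ref{toperqcont}: \emph{every interval function obtained from finitely many members of $\mathbb{H}_{nf}(X)$ by applying the point-wise operations $+$ and $\times$ is a quasi-minimal S-continuous function.}

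I expect the proof of this lemma to be the main obstacle, although it merely follows the template of the proof of Theorem~\ref{toperqcont}. Such a function $g$ is S-continuous, so each set $W_{g,\varepsilon}$ is closed; if some $W_{g,\varepsilon}$ were not nowhere dense it would contain a nonempty open set $V$, and since the finitely many relevant sets $\Gamma_{f_i}$ are closed and nowhere dense, $V\setminus\bigcup_i\Gamma_{f_i}$ is a nonempty open set, which by the upper semicontinuity of each $|f_i|$ contains a nonempty open set $D$ on which every $|f_i|$ is bounded by some $m$. On $D$ only finite intervals occur in the evaluation of $g$, so the classical estimates $w(a+b)\leq w(a)+w(b)$, $w(a\times b)\leq w(a)|b|+w(b)|a|$ and $|a\times b|\leq|a||b|$ give $w(g(x))\leq C\sum_i w(f_i(x))$ on $D$ for a constant $C$ depending only on $m$ and on the form of the expression. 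Hence $D\subseteq\bigcup_i W_{f_i,\delta}$ for a suitable $\delta>0$; but each $f_i$ is H-continuous, hence quasi-minimal, so each $W_{f_i,\delta}$ is nowhere dense by Theorem~\ref{tqminchar}, and the open set $D$ would lie in a finite union of nowhere dense sets, which is absurd. As in Theorem~\ref{toperqcont}, this works for an arbitrary completely regular $X$ precisely because it argues with the sets $W_{\cdot,\varepsilon}$ rather than with point values on a dense subset.

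With the lemma available I would dispatch the easy axioms first. For closure, $f\oplus g$ and $f\otimes g$ are H-continuous by definition, and they are nearly finite because $\Gamma_{f\oplus g}$ and $\Gamma_{f\otimes g}$ are closed and, for $x\notin\Gamma_f\cup\Gamma_g$, the intervals $(f+g)(x)$ and $(f\times g)(x)$ are finite, whence $\Gamma_{f\oplus g}\cup\Gamma_{f\otimes g}\subseteq\Gamma_f\cup\Gamma_g$ is nowhere dense. Commutativity of $\oplus$ and $\otimes$ is inherited from the commutativity of interval addition and multiplication at the point-wise level. The constants $0$ and $1$ lie in $C(X)\subseteq\mathbb{H}_{nf}(X)$, and $f\oplus 0=f$, $f\otimes 1=f$ because $f+0=f$ and $f\times 1=f$ point-wise while $f$ is H-continuous. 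For additive inverses, given $f=[\underline f,\overline f]$ the function $\ominus f=[-\overline f,-\underline f]$ is H-continuous with $\Gamma_{\ominus f}=\Gamma_f$, so $\ominus f\in\mathbb{H}_{nf}(X)$; the point-wise sum $f+(\ominus f)$ contains $0$ at every point and is quasi-minimal by Theorem~\ref{toperqcont}, so $f\oplus(\ominus f)=0$.

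It then remains to prove associativity and distributivity, and here the key lemma does the work. The functions $(f\oplus g)\oplus h$ and $f\oplus(g\oplus h)$ are H-continuous, and since $f\oplus g\subseteq f+g$, $g\oplus h\subseteq g+h$ and interval addition is associative, both are contained, point-wise, in $f+g+h$, which is quasi-minimal by the lemma; hence they coincide. Replacing $+$ by $\times$ throughout (using that interval multiplication is associative and monotone with respect to inclusion) gives associativity of $\otimes$ via the quasi-minimal function $f\times g\times h$. Finally, $f\otimes(g\oplus h)$ and $(f\otimes g)\oplus(f\otimes h)$ are H-continuous, and using $g\oplus h\subseteq g+h$, $f\otimes g\subseteq f\times g$, $f\otimes h\subseteq f\times h$, the monotonicity of the interval operations and the subdistributivity $f\times(g+h)\subseteq f\times g+f\times h$, both are contained in the point-wise function $(f\times g)+(f\times h)$, which is quasi-minimal by the lemma; hence they coincide. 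This completes the verification, the proof of the key lemma being the only step that is not routine.
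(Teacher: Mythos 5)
You should first note that the paper itself does not prove this theorem: it explicitly omits the proof, referring to standard techniques and to the finite case treated in \cite{RC}. So there is no official argument to compare against line by line. Your strategy---reduce each ring identity to the observation that two H-continuous functions contained in one and the same quasi-minimal S-continuous function must coincide, after upgrading Theorem~\ref{toperqcont} to arbitrary finite pointwise $+,\times$ combinations of members of $\mathbb{H}_{nf}(X)$---is exactly in the spirit of the machinery the paper builds for arbitrary completely regular $X$ (Theorems~\ref{tqminchar} and \ref{toperqcont}), and your sketch of the upgraded lemma (bound the moduli on an open set avoiding the sets $\Gamma_{f_i}$, apply the width estimates, conclude $D\subseteq\bigcup_i W_{f_i,\delta}$, a finite union of closed nowhere dense sets) is sound. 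The closure, commutativity, identity and additive-inverse steps are also fine.

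There is, however, one step that fails as literally stated: extended interval multiplication is \emph{not} associative, and exact pointwise associativity and subdistributivity can break at points where infinite values occur. Concretely, with the paper's conventions, $([-1,-1]\times[0,0])\times[+\infty,+\infty]=[0,0]\times[+\infty,+\infty]=[0,+\infty]$, whereas $[-1,-1]\times([0,0]\times[+\infty,+\infty])=[-1,-1]\times[0,+\infty]=[-\infty,0]$. Hence you cannot assert that $(f\otimes g)\otimes h$ and $f\otimes(g\otimes h)$ are both contained in ``the'' pointwise product $f\times g\times h$: the two groupings of the pointwise product may differ on $\Gamma_f\cup\Gamma_g\cup\Gamma_h$, and similarly for the distributive law. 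The repair is easy and stays inside your own toolkit: the offending points lie in the closed nowhere dense set $\Gamma_f\cup\Gamma_g\cup\Gamma_h$; on its open dense complement $D_0$ all values are finite intervals, where associativity and subdistributivity of interval arithmetic do hold, so by (\ref{restriction}) and (\ref{restoperations}) the two sides restrict to H-continuous functions on $D_0$ contained in a common quasi-minimal S-continuous function (your lemma applied on $D_0$), hence coincide there; then (\ref{ident}), or equivalently the uniqueness in (\ref{extension}), forces them to coincide on all of $X$. With that patch, and with the quasi-minimality lemma proved as you outline, your argument is complete.
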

The proof will be omitted. It involves standard techniques and is
partially discussed in \cite{RC} for the case of finite functions.
The zero and the identity in $\mathbb{H}_{nf}(X)$ are the constant
functions with values 0 and 1 respectively. We will denote them by
0 and 1 with the context showing whether we mean a constant
function or the respective real number. The multiplicative inverse
of $f\in \mathbb{H}_{nf}(X)$, whenever it exists, is denoted by
$\displaystyle\frac{1}{f}$. The non-zero-divisors in the ring
$\mathbb{H}_{nf}(X)$ can be characterized similarly to the ring
$C(X)$. However, unlike $C(X)$ all non-zero-divisors are
invertible. More precisely,
\begin{equation}\label{nonzerodivizor}
f \mbox{ is a non-zero-divisor}\Longleftrightarrow Z(f) \mbox{ is
nowhere dense in } X\Longleftrightarrow \exists g\in
\mathbb{H}_{nf}(X):f\otimes g=1
\end{equation}
where $Z(f)$ is the zero set of the function $f$ given by
$Z(f)=\{x\in X:0\in f(x)\}$. We denote the inverse of $f$, that
is, the function $g$ in (\ref{nonzerodivizor}) above, by
$\displaystyle\frac{1}{f}$. If
$f(x)=[\underline{f}(x),\overline{f}(x)$, $x\in X$, then we have
\begin{equation}\label{inverse}
\frac{1}{f}(x)=\left[\frac{1}{\overline{f}(x)}\frac{1}{\underline{f}(x)}\right]\
,\ \ x\in coz(f)=X\setminus Z(f).
\end{equation}
Note that in view of the property (\ref{ident}), the equality
(\ref{inverse}) determines $\displaystyle\frac{1}{f}$ in a unique
way because $coz(f)$ is dense in $X$.

Let $D$ be an open subset of $X$. The restriction $f|_D$ of $f$ on
$D$ is an H-continuous function on $D$, see \cite{Sozopol}. More
precisely,
\begin{equation}\label{restriction}
f\in\mathbb{H}_{nf}(X)\ \Longrightarrow \
f|_D\in\mathbb{H}_{nf}(D).
\end{equation}
Since $\mathbb{H}_{nf}(D)$ is also a ring it is useful to remark
that for any $f,g\in\mathbb{H}_{nf}(X)$ we have
\begin{equation}\label{restoperations}
(f\oplus g)|_D=f|_D\oplus g|_D\ , \ (f\otimes g)|_D=f|_D\otimes
g|_D.
\end{equation}
We will also use the following property, \cite{Sozopol}:
\begin{eqnarray}
&&\mbox{for any dense subset $D$ of $X$ and
$g\in\mathbb{H}_{nf}(D)$ there exists a unique}\nonumber\\
&&\mbox{function $f\in\mathbb{H}_{nf}(X)$ such that
$f|_D=g$.}\label{extension}
\end{eqnarray}

\section{Representing the Rational Extensions of $C(X)$}

The zero set $Z(f)$ and the cozero set $coz(f)$ of
$f\in\mathbb{H}_{nf}(X)$ generalize the respective concepts for
continuous function and play an important role in the ring
$\mathbb{H}_{nf}(X)$ as suggested by (\ref{nonzerodivizor}) and
(\ref{inverse}). This is further demonstrated in the following
useful lemma which extends the respective result in \cite[Section
2.2]{FGL}.

\begin{lemma}\label{theoIdealDense} Let $H_a(X)$ be a ring of
H-continuous functions such that $C(X)\subseteq H_a(X)\subseteq
\mathbb{H}_{nf}(X)$. An ideal $P$ of $H_a(X)$ is rationally dense
in $H_a(X)$ if and only if $coz(P)$ is a dense subset of $X$.
\end{lemma}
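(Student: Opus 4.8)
The plan is to prove both implications via contrapositive, leveraging the characterization of rational density as ``no nonzero annihilator'' together with the structure of $\mathbb{H}_{nf}(X)$ described by (\ref{nonzerodivizor})--(\ref{extension}) and the identity principle (\ref{ident}).

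First I would establish the easy direction: if $coz(P)$ is not dense, then $P$ has a nonzero annihilator, so $P$ is not rationally dense. Indeed, $coz(P)=\bigcup_{p\in P}coz(p)$ is an open set; if it fails to be dense, its complement $Z(P)=\bigcap_{p\in P}Z(p)$ has nonempty interior $U$. Using complete regularity of $X$ and the fact that $C(X)\subseteq H_a(X)$, pick a nonzero $h\in C(X)$ supported in $U$ (so $coz(h)\subseteq U$). Then for every $p\in P$ and every $x\in U$ we have $0\in p(x)$, hence $0\in (p\times h)(x)$ on $coz(h)$ and trivially off $coz(h)$; since the product in $\mathbb{H}_{nf}$ is the unique H-continuous function inside $p\times h$, and $coz(h)$ is dense in $U$, the identity principle forces $h\otimes p=0$. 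Thus $h$ is a nonzero annihilator of $P$ in $H_a(X)$, contradicting rational density.

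Next, the main direction: assume $coz(P)$ is dense; I must show $P$ has no nonzero annihilator in $H_a(X)$. Suppose $h\in H_a(X)$ satisfies $h\otimes p=0$ for all $p\in P$. For each $p\in P$, on the open set $coz(p)$ the function $p$ is a non-zero-divisor in $\mathbb{H}_{nf}(coz(p))$ by (\ref{nonzerodivizor}) (its zero set there is empty, hence nowhere dense), so it is invertible there; restricting the equation $h\otimes p = 0$ to $coz(p)$ and using (\ref{restoperations}) gives $h|_{coz(p)}=0$. Letting $p$ range over $P$, we conclude $h$ vanishes on the open set $coz(P)=\bigcup_{p\in P}coz(p)$, which is dense. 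By the identity principle (\ref{ident}) applied on this dense subset, $h=0$ on all of $X$. Hence $P$ is rationally dense in $H_a(X)$.

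The technical point requiring care — the ``main obstacle'' — is the passage from the pointwise containment $0\in(h\times p)(x)$ back to the ring-theoretic identity $h\otimes p=0$, and conversely from $h\otimes p=0$ to vanishing on a dense open set. Both rest on the fact that $\oplus,\otimes$ are defined via the \emph{unique} H-continuous function inside the pointwise operation together with (\ref{ident}) and (\ref{restoperations}); one must also verify that the product of a function vanishing on a dense open set with anything in $\mathbb{H}_{nf}(X)$ is again the zero function, which again follows from (\ref{ident}) since $Z(0\times g)$ contains that dense open set. The use of complete regularity in the first direction to produce a nonzero continuous bump function inside an arbitrary nonempty open set is essential and is exactly the hypothesis on $X$ adopted in the paper. \rule{0.5em}{0.5em}
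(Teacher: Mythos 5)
The paper states Lemma \ref{theoIdealDense} without proof, referring the reader to \cite{FGL}, so there is no in-paper argument to compare against; judged on its own, your proof is correct, and it uses exactly the toolkit the paper itself deploys later in the proof of Theorem \ref{theoHRcompl}: localization to $coz(p)$, where $p$ is invertible by (\ref{nonzerodivizor})--(\ref{inverse}), compatibility of the operations with restriction to open sets (\ref{restriction})--(\ref{restoperations}), and the identity principle (\ref{ident}) on the dense set $coz(P)$. Both directions go through: if $coz(P)$ is dense, any $h\in H_a(X)$ annihilating $P$ satisfies $h|_{coz(p)}=0$ for every $p\in P$, hence $h=0$ by (\ref{ident}); if $coz(P)$ is not dense, a bump function $h\in C(X)\subseteq H_a(X)$ (complete regularity) with $coz(h)$ contained in the interior of $\bigcap_{p\in P}Z(p)$ is a nonzero annihilator. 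One clause should be tightened: to get $h\otimes p=0$ you invoke ``$coz(h)$ is dense in $U$'' together with the identity principle, but $coz(h)$ need not be dense in $U$, and no density argument is required there. Since $0\in(p\times h)(x)$ for every $x\in X$ (on $U$ because $0\in p(x)$, elsewhere because $h(x)=0$) and $p\times h$ is quasi-minimal by Theorem \ref{toperqcont}, the zero function is an H-continuous function contained in $p\times h$ and hence is the unique one, so $h\otimes p=0$ immediately --- the same ``inclusion implies equality'' step the paper uses to pass from (\ref{theoHRcompl4}) to $\varphi(p)=\psi\otimes p$. With that small repair the argument is complete and supplies the proof the paper leaves to the reference.
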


Any ideal $P$ of a ring $A$ is also an A-module. The rational
completeness of a ring can be characterized in terms of the
A-homomorphisms from the rationally dense ideals of $A$ to $A$ as
shown in the next theorem \cite{Lambek1966}.

\begin{theorem}\label{theoRComplCond}
A ring $A$ is rationally complete if for every rationally dense
ideal $P$ of $A$ and an $A$-homomorphism $\varphi:P\to A$ there
exists $s\in P$ such that $\varphi(p)=sp$, $p\in P$.
\end{theorem}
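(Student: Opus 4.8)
The plan is to check the definition of rational completeness head-on: I take an arbitrary rational extension $B$ of $A$ and prove that $B=A$, which is exactly the statement that $A$ has no proper rational extension. So I fix $b\in B$ and aim to show $b\in A$; once this is done for every $b$, the proof is complete.

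The first step is to observe that $P_b=b^{-1}A=\{a\in A:ba\in A\}$ is a rationally dense ideal of $A$. It is additively closed, and for $a\in P_b$ and $c\in A$ one has $b(ca)=c(ba)\in A$, so $ca\in P_b$; hence $P_b$ is an ideal of $A$. Moreover, by the definition of a rational extension $P_b$ has no nonzero annihilator in $B$, so a fortiori it has none in the subring $A$, and therefore $P_b$ is rationally dense in $A$, which makes the hypothesis of the theorem applicable to it.

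The second step is to feed the appropriate homomorphism into that hypothesis. Define $\varphi\colon P_b\to A$ by $\varphi(a)=ba$; this takes values in $A$ because $ba\in A$ for $a\in P_b$, and it is an $A$-module homomorphism since $\varphi(ca)=b(ca)=c(ba)=c\,\varphi(a)$ for $c\in A$. By hypothesis there is $s\in P_b$, hence $s\in A$, with $\varphi(a)=sa$ for all $a\in P_b$; equivalently $(b-s)a=0$ for every $a\in P_b$, so $b-s\in B$ annihilates $P_b$. Since $P_b$ is rationally dense \emph{in $B$}, this forces $b-s=0$, i.e.\ $b=s\in A$. As $b$ was arbitrary we conclude $B=A$, and as $B$ was an arbitrary rational extension, $A$ is rationally complete.

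The argument is essentially an unwinding of the definitions, so I do not anticipate a serious obstacle; the one point that needs care is the interplay of the two notions of density — the hypothesis is invoked for $P_b$ as a rationally dense ideal of $A$, whereas the final cancellation $b-s=0$ relies on $P_b$ being rationally dense in the larger ring $B$. One could alternatively route the proof through the explicit description of $\mathcal{Q}(A)$ as a direct limit of the modules $\mathrm{Hom}_A(P,A)$ over the rationally dense ideals $P$ of $A$ and show that the canonical map $A\to\mathcal{Q}(A)$ is surjective, but the direct approach above avoids setting up that machinery.
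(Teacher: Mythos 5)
Your proof is correct. Note that the paper itself gives no proof of this statement: it is quoted as a known criterion with a citation to the Fine--Gillman--Lambek monograph, so there is no internal argument to compare yours against. Your direct verification is the natural one at the level of the paper's own definitions: for a rational extension $B$ of $A$ and $b\in B$, the ideal $P_b=b^{-1}A$ has zero annihilator in $B$ by the definition of rational extension, hence a fortiori in $A$, so the hypothesis applies to the $A$-homomorphism $a\mapsto ba$ and yields $s\in A$ with $(b-s)P_b=0$; density of $P_b$ in $B$ then cancels to $b=s\in A$, giving $B=A$. You also correctly isolate the only delicate point, namely that the hypothesis is invoked with density relative to $A$ while the final cancellation needs density relative to $B$, and your a fortiori remark handles it. One further observation: the theorem as printed demands $s\in P$, whereas the standard statement (and the paper's own later use in the proof that $\mathbb{H}_{nf}(X)$ is rationally complete, where the constructed $\psi$ is only shown to lie in $\mathbb{H}_{nf}(X)$, not in the ideal) requires only $s\in A$; your argument is insensitive to this since you use $s$ only as an element of $A$, which is consistent with the corrected reading. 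The alternative route you sketch, through the construction of $\mathcal{Q}(A)$ as a direct limit of the modules $\mathrm{Hom}_A(P,A)$ over rationally dense ideals, is essentially the proof in the cited source; your approach is more elementary and self-contained.
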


In the sequel we refer to the A-homomorphism shortly as
homomorphisms.

\begin{theorem}\label{theoHRcompl}
The ring $\mathbb{H}_{nf}(X)$ is rationally complete.
\end{theorem}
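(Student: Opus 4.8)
The plan is to verify the criterion from Theorem~\ref{theoRComplCond}: given a rationally dense ideal $P$ of $\mathbb{H}_{nf}(X)$ and a homomorphism $\varphi:P\to\mathbb{H}_{nf}(X)$, I must produce $s\in\mathbb{H}_{nf}(X)$ with $\varphi(p)=s\otimes p$ for all $p\in P$. First I would invoke Lemma~\ref{theoIdealDense} to get that $coz(P)=\bigcup_{p\in P}coz(p)$ is a dense open subset $D$ of $X$. The idea is to build $s$ locally on the pieces $coz(p)$ and then glue. For a fixed non-zero-divisor-like situation, note that on $coz(p)$ the function $p$ is invertible in $\mathbb{H}_{nf}(coz(p))$ by (\ref{nonzerodivizor})--(\ref{inverse}) applied to the restriction, since $Z(p|_{coz(p)})=\emptyset$; so the candidate is $s_p := \varphi(p)|_{coz(p)}\otimes \frac{1}{p|_{coz(p)}}\in\mathbb{H}_{nf}(coz(p))$.

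The key step is consistency of these local candidates. For $p,q\in P$, on the open set $coz(p)\cap coz(q)$ I want $s_p=s_q$, i.e.\ $\varphi(p)\otimes q = \varphi(q)\otimes p$ after restriction. This follows from the $\mathbb{H}_{nf}(X)$-linearity of $\varphi$: $q\otimes\varphi(p)=\varphi(q\otimes p)=\varphi(p\otimes q)=p\otimes\varphi(q)$ as an identity in $\mathbb{H}_{nf}(X)$, hence its restriction to the open set $coz(p)\cap coz(q)$ holds in $\mathbb{H}_{nf}(coz(p)\cap coz(q))$ by (\ref{restoperations}); dividing by the invertible $p\otimes q$ there gives $s_p|_{coz(p)\cap coz(q)}=s_q|_{coz(p)\cap coz(q)}$, using (\ref{restoperations}) and associativity/commutativity in the ring $\mathbb{H}_{nf}(coz(p)\cap coz(q))$. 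Since restriction of an H-continuous function to an open subset is again H-continuous and agrees on overlaps, and since H-continuous functions are determined by their values on dense subsets by (\ref{ident}), the family $\{s_p\}$ patches to a well-defined H-continuous function $\tilde s$ on the dense open set $D=coz(P)$. Here one should check that $\tilde s$ is in fact nearly finite on $D$: each $s_p$ is nearly finite, the width/infinity sets are closed and nowhere dense in their charts, and one assembles $\Gamma_{\tilde s}$ from these; alternatively one observes $\tilde s\otimes p|_D = \varphi(p)|_D$ forces $\tilde s$ to be nearly finite since $\varphi(p)$ is and $p$ is invertible off a nowhere dense set.

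Now apply the extension property (\ref{extension}): since $D$ is dense in $X$ and $\tilde s\in\mathbb{H}_{nf}(D)$, there is a unique $s\in\mathbb{H}_{nf}(X)$ with $s|_D=\tilde s$. It remains to check $\varphi(p)=s\otimes p$ for every $p\in P$. By construction, for each $q\in P$ the functions $s\otimes p$ and $\varphi(p)$ restrict on the open set $coz(q)$ to functions that agree on the dense subset $coz(q)\cap coz(p)$ — indeed on $coz(p)$ we have $s|_{coz(p)}\otimes p|_{coz(p)}=s_p\otimes p|_{coz(p)}=\varphi(p)|_{coz(p)}$, and off $Z(p)$ is dense in $coz(q)$ only through $coz(p)\cap coz(q)$, so I would instead argue directly: $s\otimes p$ and $\varphi(p)$ are both in $\mathbb{H}_{nf}(X)$ and agree on the dense set $coz(p)$ by the displayed computation, hence agree on $X$ by (\ref{ident}). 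Thus $\varphi(p)=s\otimes p$ with $s\in\mathbb{H}_{nf}(X)$; taking any $p\in P$ that is a non-zero-divisor shows we may even take $s\in P$ if desired, but Theorem~\ref{theoRComplCond} only requires $s\in\mathbb{H}_{nf}(X)$ (reading the quantifier as "$s$ in the ring"), completing the proof.

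The main obstacle I anticipate is the gluing and the nearly-finite bookkeeping: one must be careful that the locally defined inverses $\frac{1}{p|_{coz(p)}}$ genuinely lie in $\mathbb{H}_{nf}$ of the chart (this is exactly (\ref{nonzerodivizor})--(\ref{inverse}) with empty zero set), that the patched function is nearly finite rather than merely H-continuous, and that all the algebraic identities are transported correctly between the rings $\mathbb{H}_{nf}(X)$, $\mathbb{H}_{nf}(D)$ and $\mathbb{H}_{nf}(coz(p)\cap coz(q))$ via (\ref{restriction})--(\ref{restoperations}). Once the restriction/extension machinery of Section~2 is used systematically, the ring-theoretic part is the same as the classical $C(X)$ argument in~\cite{FGL}.
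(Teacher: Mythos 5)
Your construction of the local candidates $s_p=\varphi(p)|_{coz(p)}\otimes\frac{1}{p|_{coz(p)}}$, the consistency check via $q\otimes\varphi(p)=p\otimes\varphi(q)$, the gluing over $coz(P)$ and the extension by (\ref{extension}) all match the paper's proof. The genuine gap is in your final verification. You conclude $\varphi(p)=s\otimes p$ by saying the two functions ``agree on the dense set $coz(p)$'' and invoking (\ref{ident}). But $coz(p)$ need not be dense in $X$ for an individual $p\in P$: rational density of $P$ only makes the union $coz(P)$ dense (Lemma \ref{theoIdealDense}), while a single $p$ may be a zero divisor, i.e.\ $Z(p)$ may have nonempty interior, in which case your displayed computation proves equality only on $coz(p)$ and says nothing on the open set $X\setminus\overline{coz(p)}$. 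So as written the argument fails exactly for the elements of $P$ that force one to work with an ideal rather than a single non-zero-divisor.

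There are two ways to close this, and the paper takes the first. It proves separately that $\varphi(p)(x)=0$ for $x\in X\setminus\overline{coz(p)}$ (using the identity $\varphi(p)\otimes q=p\otimes\varphi(q)$ on each $coz(q)$, where $p$ vanishes, together with the minimality/density property of H-continuous functions), and then combines the equality on $\overline{coz(p)}$ with the trivial membership $0\in\psi(x)\times p(x)$ on the complement to get the pointwise inclusion $\varphi(p)(x)\subseteq\psi(x)\times p(x)$ on all of $X$; by the definition of $\otimes$ as the unique H-continuous function contained in the pointwise product, this forces $\varphi(p)=\psi\otimes p$. Alternatively, you could repair your own route by observing that for \emph{every} $q\in P$ one has, on all of $coz(q)$,
\[
(s\otimes p)|_{coz(q)}=s_q\otimes p|_{coz(q)}
=\frac{1}{q|_{coz(q)}}\otimes\varphi(q)|_{coz(q)}\otimes p|_{coz(q)}
=\frac{1}{q|_{coz(q)}}\otimes q|_{coz(q)}\otimes\varphi(p)|_{coz(q)}
=\varphi(p)|_{coz(q)},
\]
so that $s\otimes p$ and $\varphi(p)$ agree on the genuinely dense set $coz(P)$, and only then apply (\ref{ident}). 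Your closing aside that one ``may even take $s\in P$'' is unsupported and unnecessary; as you correctly note, the criterion only needs $s$ in the ring, which is what the construction delivers. The remaining bookkeeping you flag (H-continuity and near-finiteness of the glued function on $coz(P)$) is handled the same way in the paper and is not the issue.
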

\begin{proof}
We use Theorem \ref{theoRComplCond}. Let $P$ be an ideal of
$\mathbb{H}_{nf}(X)$ and $\varphi:P\to \mathbb{H}_{nf}(X)$ a
homomorphism. Let $p\in P$. Consider the ring
$\mathbb{H}_{nf}(coz(p))$. By (\ref{inverse}), $p$ is an
invertible element of $\mathbb{H}_{nf}(coz(p))$. Since
$\phi(p)|_D\in\mathbb{H}_{nf}(coz(p))$, see (\ref{restriction}),
we can consider the function
$\psi_p=\frac{1}{p}\otimes\phi(p)|_D\in\mathbb{H}_{nf}(coz(p))$.

Now we define the function $\psi\in\mathbb{H}_{nf}(X)$ in the
following way. For any $x\in coz(P)$ select $p\in P$ such that
$0\notin p(x)$. Then
\[
\psi(x)=\psi_p(x)
\]
It is easy to see that the definition does not depend on the
function $p$. Indeed, let $q\in P$ be such that $0\notin q(x)$.
Since $\varphi$ is a homomorphism we have
\begin{equation}\label{theoHRcompl1}
\varphi(p)\otimes q=p\otimes\varphi(q).
\end{equation}
Denote $D=coz(p)\cap coz(q)$. Clearly $D$ is an open neighborhood
of $x$. Using (\ref{restoperations}) we have
\[
\varphi(p)|_D\otimes q|_D=p|_D\otimes\varphi(q)|_D,
\]
which implies
\[
\frac{1}{p}|_D\otimes\varphi(p)|_D=\frac{1}{q}|_D\otimes\varphi(q)|_D.
\]
Therefore $\phi_p(y)=\phi_q(y)$, $y\in D$. In particular,
$\phi_p(x)=\phi_q(x)$.

Now $\psi$ is defined on $coz(P)$ and it is easy to see that
$\psi\in\mathbb{H}_{nf}(coz(P))$. Since $coz(P)$ is dense in $X$,
see Lemma \ref{theoIdealDense}, using (\ref{extension}) the
function $\psi$ can be defined on the rest of the set $X$ in a
unique way so that $\psi\in\mathbb{H}_{nf}(X))$.

We will show that $\varphi(p)=\psi\otimes p$, $p\in P$. Let $p\in
P$. We have
\[
(\psi\otimes p)|_{coz(p)}=(\psi_p\otimes
p|_{coz(p)})=\left(\frac{1}{p}\right)|_{coz(p)}\otimes\varphi(p)|_{coz(p)}\otimes
p|_{coz(p)}=\varphi(p)|_{coz(p)}
\]
Then, using also (\ref{extension}), we obtain
\begin{equation}\label{theoHRcompl2}
(\psi\otimes p)(x)=\varphi(p)(x)\ ,\ \ x\in\overline{coz(p)},
\end{equation}
where $\overline{coz(p)}$ denotes the topological closure of the
set $coz(p)$. Applying standard techniques based on the minimality
property of H-continuous functions one can obtain that
$\varphi(p)(x)=0$ for $x\in X\setminus\overline{cos(p)}\subset
Z(p)$. Then we have
\begin{equation}\label{theoHRcompl3}
\varphi(p)(x)=0\in\psi(x)\times p(x)\ , \ \ x\in
X\setminus\overline{coz(p)}
\end{equation}
From (\ref{theoHRcompl2}) and (\ref{theoHRcompl3}) it follows
\begin{equation}\label{theoHRcompl4}
\varphi(p)(x)\subseteq\psi(x)\times p(x)\ ,\ \ x\in X.
\end{equation}
By the definition of the operation $\otimes$ the inclusion
(\ref{theoHRcompl4}) implies $\varphi(p)=\psi\otimes p$, which
completes the proof.
\end{proof}

The rational completeness of $\mathbb{H}_{nf}(X)$ implies that any
rational extension of any subring of $\mathbb{H}_{nf}(X))$ is a
subring of $\mathbb{H}_{nf}(X)$. In particular this applies to
$C(X)$, where the respective maximal ring of quotients and
classical ring of quotients are characterized in the next theorem.
As in the classical theory we call a subset $V$ of $X$ a zero set
if there exists $f\in C(X)$ such that $V=Z(f)$.

\begin{theorem}[Representation Theorem]\label{theoRatExt}
The ring of quotients and the classical ring of quotients of
$C(X)$ are the following subrings of $\mathbb{H}_{nf}(X))$:
\begin{eqnarray*}
&a)&\mathcal{Q}(C(X))=\mathbb{H}_{nd}(X)=\{f\in\mathbb{H}(X):\overline{W_f}\mbox{
is nowhere dense}\}\\
&b)&\mathcal{Q}_{cl}(C(X))=\mathbb{H}_{sz}(X)=\{f\in\mathbb{H}(X):W_f\mbox{
is a subset of a nowhere dense zero set}\}
\end{eqnarray*}
\end{theorem}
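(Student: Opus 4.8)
The plan is to prove both representations by combining the rational completeness of $\mathbb{H}_{nf}(X)$ (Theorem \ref{theoHRcompl}) with the characterization of rationally dense ideals (Lemma \ref{theoIdealDense}) and the known fact that the maximal ring of quotients $\mathcal{Q}(A)$ of a subring $A \subseteq \mathbb{H}_{nf}(X)$ sits inside $\mathbb{H}_{nf}(X)$. So the task reduces to identifying, among all $f \in \mathbb{H}_{nf}(X)$, exactly those that lie in $\mathcal{Q}(C(X))$ and those that lie in $\mathcal{Q}_{cl}(C(X))$.

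\textbf{Part a).} First I would show $\mathbb{H}_{nd}(X) \subseteq \mathcal{Q}(C(X))$. Given $f$ with $\overline{W_f}$ nowhere dense, I need that $f^{-1}C(X) = \{g \in C(X) : f \otimes g \in C(X)\}$ is rationally dense in the ring generated by $C(X)$ and $f$; by Lemma \ref{theoIdealDense} it suffices to show $coz(f^{-1}C(X))$ is dense in $X$. The point is that on the open dense set $X \setminus \overline{W_f}$, the function $f$ is real-valued and continuous, and near any point of this set one can use complete regularity of $X$ to produce a nonzero $g \in C(X)$ supported in a small neighborhood on which $f$ is bounded, so that $f \otimes g = f \times g \in C(X)$; this $g$ witnesses membership in $f^{-1}C(X)$ with $x \in coz(g)$. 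Conversely, for $f \in \mathcal{Q}(C(X)) \subseteq \mathbb{H}_{nf}(X)$, I would argue that if $\overline{W_f}$ had nonempty interior $U$, then $f^{-1}C(X)$ restricted to $U$ would be forced (since products with continuous functions must land in $C(X)$, hence be single-valued, hence vanish where $f$ is genuinely interval-valued) to have cozero set missing $U$, contradicting rational density via Lemma \ref{theoIdealDense}. This direction is the delicate one: one must show that no continuous $g$ can ``kill'' the width of $f$ on an open set without vanishing there, which uses the width inequality for products (as in the proof of Theorem \ref{toperqcont}) together with $g$ being single-valued.

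\textbf{Part b).} Here I would use the classical ring of quotients $\mathcal{Q}_{cl}(C(X)) = \{p/q : p, q \in C(X),\ q \text{ not a zero divisor}\}$, and recall that $q \in C(X)$ is a non-zero-divisor iff $Z(q)$ has empty interior, equivalently $coz(q)$ is dense. For such $q$, by (\ref{nonzerodivizor})–(\ref{inverse}) the element $1/q \in \mathbb{H}_{nf}(X)$ is the H-continuous function agreeing with the reciprocal on $coz(q)$, and its width set $W_{1/q}$ is contained in the closure of $Z(q)$ — which, crucially, is a zero set (namely $Z(q)$ itself is closed, and $\overline{Z(q)} = Z(q)$). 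So $p/q = p \otimes (1/q)$ has $W_{p/q} \subseteq W_{1/q} \subseteq Z(q)$, a nowhere dense zero set, giving $\mathcal{Q}_{cl}(C(X)) \subseteq \mathbb{H}_{sz}(X)$. For the reverse inclusion, given $f \in \mathbb{H}(X)$ with $W_f \subseteq Z(q)$ for some $q \in C(X)$ with $Z(q)$ nowhere dense, I would multiply: $f \otimes q$ is H-continuous and single-valued wherever $f$ is (i.e. off $W_f \subseteq Z(q)$) and also where $q$ vanishes, so $f \otimes q$ is continuous, call it $p \in C(X)$; then $q$ is a non-zero-divisor and $f = p/q$. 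One must check $f$ is nearly finite here — $\Gamma_f \subseteq W_f \subseteq Z(q)$ handles that — and that $f \otimes q$ is genuinely in $C(X)$ rather than merely H-continuous, which again is the minimality/single-valuedness argument.

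\textbf{Main obstacle.} The hard part will be the converse inclusions, specifically showing that a continuous (or in part b), a non-zero-divisor times $f$) multiplier cannot collapse the interval-valuedness of $f$ except on the nowhere-dense sets prescribed, and dually that every $f$ of the stated form actually arises as such a quotient. Both hinge on the precise interplay between the point-wise product $f \times g$ and the operation $f \otimes g = [f \times g]$: one needs that when $g$ is point-valued on a dense set, $f \times g$ fails to be single-valued exactly on (a subset of) $W_f \cup Z(g)$, and to control $\overline{W_{f \otimes g}}$ accordingly. This is essentially a refinement of the width-estimate bookkeeping in Theorems \ref{tqminchar} and \ref{toperqcont}, combined with complete regularity of $X$ to manufacture the continuous test functions needed for the density arguments.
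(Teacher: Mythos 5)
Your route is legitimate and in fact differs from the paper's in the ``maximality'' half: the paper proves that $\mathbb{H}_{nd}(X)$ is a rational extension of $C(X)$ (by the same complete-regularity test-function construction you sketch) and then asserts that $\mathbb{H}_{nd}(X)$ is rationally complete (proof omitted as analogous to Theorem \ref{theoHRcompl}), whence it is the maximal ring of quotients. You instead bound $\mathcal{Q}(C(X))$ from above inside $\mathbb{H}_{nf}(X)$ by an annihilator argument, and that argument does go through: if $g\in f^{-1}C(X)$ and $g(a)\neq 0$, then on a neighbourhood $N$ where $g$ is nonvanishing the continuous function $(f\otimes g)/g$ is a selection of $f|_N$, so by minimality $f|_N$ is point-valued and finite, hence $N\cap W_f=\emptyset$ and thus $a\notin\overline{W_f}$; consequently every $g\in f^{-1}C(X)$ vanishes on $\overline{W_f}$, and a nonzero $\psi\in C(X)$ supported in a nonempty interior of $\overline{W_f}$ annihilates $f^{-1}C(X)$, contradicting rational density. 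Modulo the usual bookkeeping (the embedding $\mathcal{Q}(C(X))\subseteq\mathbb{H}_{nf}(X)$ over $C(X)$, which the paper also invokes, and checking the rational-extension condition for all elements of the ring generated, not just $f$), this is a sound alternative that actually supplies a substitute for the step the paper leaves out.

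\textbf{On part b) there is a genuine gap.} For the inclusion $\mathbb{H}_{sz}(X)\subseteq\mathcal{Q}_{cl}(C(X))$ you set $p=f\otimes q$ and claim $p\in C(X)$, dismissing the infinite values of $f$ via ``$\Gamma_f\subseteq W_f\subseteq Z(q)$''. But $\Gamma_f\subseteq W_f$ is false: by the paper's definition $w([\pm\infty,\pm\infty])=0$, so points where $f$ takes an infinite point value lie in $\Gamma_f$ but not in $W_f$. Concretely, let $X=\mathbb{R}$ and let $f$ be the H-continuous extension of $1/x^2$ (so $f(0)=+\infty$): then $W_f=\emptyset$, so $f\in\mathbb{H}_{sz}(X)$ with witness $q\equiv 1$, yet $f\otimes q=f\notin C(X)$; with the witness $q(x)=x$ one gets $f\otimes q$ equal to the extension of $1/x$, which is interval-valued at $0$, again not in $C(X)$. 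Requiring $q$ to vanish on $\Gamma_f$ does not repair this, because $q$ need not vanish fast enough relative to the blow-up of $f$, and the rate is not controlled by any choice of zero set. The paper's proof is built exactly to avoid this: it normalizes by $1+f\otimes f$, taking $\phi=\dfrac{f\otimes g}{1+f\otimes f}$ and $\psi=\dfrac{g}{1+f\otimes f}$, observes $|\phi|\leq|g|$ and $|\psi|\leq|g|$ (so no infinite values) and that both vanish on $W_f$ and where $f$ is infinite (so no proper interval values), hence $\phi,\psi\in C(X)$ with $Z(\psi)=Z(g)\cup\Gamma_f$ nowhere dense, and then $f=\phi/\psi\in\mathcal{Q}_{cl}(C(X))$. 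Your argument needs this damping factor (or some equivalent device); as written, the step ``$f\otimes q$ is genuinely in $C(X)$'' fails.
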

\begin{proof}
a) First we need to show that $\mathbb{H}_{nd}(X)$ is a ring of
quotients of $C(X)$. In terms of the definition we have to prove
that for any $\phi,\psi\in\mathbb{H}_{nd}(X))$, $\psi\neq 0$,
there exists $f\in C(X)$ such that $\phi\otimes f\in C(X)$ and
$\psi\otimes f\neq 0$. Since $\psi\neq 0$ the open set $coz(\psi)$
is not empty. Using that $W_\phi$ and $\Gamma_\phi$ are closed
nowhere dense sets we have $coz(\psi)\setminus(W_\phi\cup
\Gamma_\phi)\neq\emptyset$. Let $a\in
coz(\psi)\setminus(W_\phi\cup \Gamma_\phi)$. By the complete
regularity of $X$: (i) there exists a neighborhood $V$ of $x$ such
that $\overline{V}\subset coz(\psi)\setminus(W_\phi\cap
\Gamma_\phi)$; (ii) there exists a function $f\in C(X)$ such that
$f(a)=1$ and $f(x)=0$ for $x\in X\setminus V$. We have that
$\psi\times f$ does not have zeros in a neighborhood of $a$,
Therefore $\psi\otimes f\neq 0$. We prove next that $\phi\otimes
f\in C(X)$. Indeed, $\phi(x)\times f(x)=0$ for $x$ in the open set
$ (X\setminus\overline{V})\setminus (W_\phi\cup\Gamma_\phi)$ which
is dense in the open set $X\setminus\overline{V}$. Therefore
$(\phi\otimes f)|_{X\setminus\overline{V}}=0$, which implies
$(\phi\otimes f)|_{X\setminus\overline{V}}\in
C(X\setminus\overline{V})$. Obviously, $(\phi\otimes
f)|_{X\setminus(W_\phi\cup\Gamma_\phi)}\in
C(X\setminus(W_\phi\cup\Gamma_\phi)$. Hence, $\phi\otimes f\in
C(X)$. Therefore, $\mathbb{H}_{nd}(X)$ is a ring of quotients of
$C(X)$. It is the maximal ring of quotients of $C(X)$ if and only
if it is rationally complete. The proof of the rational
completeness of $\mathbb{H}_{nd}(X)$ is done in a similar way as
for $\mathbb{H}_{nf}(X)$ and will be omitted.

b) Let $f,g\in C(X)$, $Z(g)$ - nowhere dense in $X$. Then using
(\ref{nonzerodivizor}) we obtain
$\displaystyle\frac{f}{g}=f\otimes \frac{1}{g}\in
\mathbb{H}_{nf}(X)$. Moreover, by (\ref{inverse}) we have
$W_\frac{f}{g}\subseteq Z(g)$ which implies
$\displaystyle\frac{f}{g}\in\mathbb{H}_{nd}(X)$. Therefore
$\mathcal{Q}_{cl}(C(X))\subseteq\mathbb{H}_{nd}(X)$. Now we will
prove the inverse inclusion. Let $f\in\mathbb{H}_{nd}(X)$. Then
there exists $g\in C(X)$ such that $Z(g)$ is nowhere dense on $X$
and $W_f\subseteq Z(g)$. Consider the functions
\begin{eqnarray*}
\phi&=&\frac{f\otimes g}{1+f\otimes f}\\
\psi&=&\frac{g}{1+f\otimes f}.
\end{eqnarray*}
It easy to see that $|\phi|\leq|g|$ and $|\psi|\leq|g|$, which
implies that $\Gamma_\phi=\Gamma_\psi=\emptyset$. Furthermore,
since $\phi(x)=\psi(x)=0$ for all $x\in W_f$ we have
$W_\phi=W_\psi=\emptyset$. Hence, $\phi,\psi\in C(X)$. Since
$Z(\psi)=Z(g)$ is nowhere dense in $X$, the function $\psi$ is an
invertible element of $\mathbb{H}_{nd}(X)$. Then $\displaystyle
f=\frac{\phi}{\psi}\in\mathcal{Q}_{cl}(C(X))$, which completes the
proof.
\end{proof}

\section{Representing the metric completions of $\mathcal{Q}(C(X))$ and
$\mathcal{Q}_{cl}(C(X))$}

The metric $\rho$ on $C(X)$ given in (\ref{rho}) can be extended
to $\mathbb{H}_{nf}(X)$ as follows
\begin{equation}\label{rhoext}
\rho(f,g)=\sup_{x\in
X\setminus(\Gamma_f\cup\Gamma_g)}\frac{|f\ominus g|}{1+|f\ominus
g|},
\end{equation}
where $f\ominus g=f\oplus (-1)g$.

\begin{theorem}\label{theoMcomplete}
The set $\mathbb{H}_{nf}(X)$ is a complete metric space with
respect to the metric $\rho$ in (\ref{rhoext}).
\end{theorem}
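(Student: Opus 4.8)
The plan is to show that a $\rho$-Cauchy sequence $(f_n)$ in $\mathbb{H}_{nf}(X)$ converges to some $f\in\mathbb{H}_{nf}(X)$. The metric $\rho$ measures the supremum of $\frac{|f\ominus g|}{1+|f\ominus g|}$ over the (dense, open) set $X\setminus(\Gamma_f\cup\Gamma_g)$, so on this set $\rho$-convergence is essentially uniform convergence after the bounded reparametrisation $t\mapsto t/(1+t)$. First I would pass to the ``graph'' or point-value description: each $f_n$ is continuous on the dense open set $X\setminus(W_{f_n}\cup\Gamma_{f_n})$, and since at most countably many of these exceptional sets are in play, their union $N=\bigcup_n(W_{f_n}\cup\Gamma_{f_n})$ is a set of first category whose complement $G=X\setminus N$ is dense (this uses only that each $W_{f_n}$, $\Gamma_{f_n}$ is a countable union of closed nowhere dense sets — but care is needed since $X$ is merely completely regular, not Baire, so $G$ need not be dense; see the obstacle below). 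On $G$ every $f_n$ is real-valued and continuous, and the Cauchy condition gives a genuine uniform-type Cauchy condition there, so the pointwise limit $h=\lim f_n$ exists on $G$ and is continuous on $G$.

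Next I would reconstruct an H-continuous function on all of $X$ from this partial limit. The natural candidate is the ``H-hull'': define, for $x\in X$,
\[
\underline{f}(x)=\sup_{U\ni x}\ \inf_{y\in U\cap G} h(y),\qquad
\overline{f}(x)=\inf_{U\ni x}\ \sup_{y\in U\cap G} h(y),
\]
the infima and suprema over open neighbourhoods $U$ of $x$, and set $f(x)=[\underline{f}(x),\overline{f}(x)]$. Standard arguments (of the kind already invoked repeatedly in the excerpt, e.g.\ in the proof of Theorem~\ref{theoHRcompl}) show that $f$ is S-continuous, that $f(x)=h(x)$ on $G$, and that $f$ is the minimal S-continuous function through $h|_G$, hence H-continuous; this is exactly the content behind property~(\ref{extension}). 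The fact that $N$ was first category, together with the uniform control from the Cauchy condition on the values of $\underline{f}_n,\overline{f}_n$, shows that $W_f$ and $\Gamma_f$ are first category closed-nowhere-dense sets, so $f\in\mathbb{H}_{nf}(X)$.

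Finally I would verify $\rho(f_n,f)\to 0$. Fix $\varepsilon>0$ and $N_\varepsilon$ with $\rho(f_n,f_m)<\varepsilon$ for $n,m\ge N_\varepsilon$; this bounds $|f_n\ominus f_m|$ (equivalently the pointwise quantities $\frac{|f_n(x)-f_m(x)|}{1+|f_n(x)-f_m(x)|}$) uniformly on $G$. Letting $m\to\infty$ and using that $\frac{t}{1+t}$ is continuous and increasing, we get $\frac{|f_n(x)-f(x)|}{1+|f_n(x)-f(x)|}\le\varepsilon$ for all $x\in G$ and $n\ge N_\varepsilon$; since $G$ is dense and the quantities involved depend continuously on the point values, the identity property~(\ref{ident}) (applied to $f_n\ominus f$) propagates this bound to $X\setminus(\Gamma_{f_n}\cup\Gamma_f)$, giving $\rho(f_n,f)\le\varepsilon$.

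The main obstacle is precisely the absence of a Baire hypothesis on $X$: the set $G=X\setminus N$ where all $f_n$ are simultaneously finite and continuous is dense \emph{only} if $X$ is Baire, and in the general completely regular case one cannot argue pointwise on such a $G$. I expect the real work of the proof to be a substitute for this: either one shows that $\mathbb{H}_{nf}(X)$ embeds (via $f\mapsto f|_{D}$ for a suitable dense $D$, or via a Baire-ification / absolute of $X$) isometrically into $\mathbb{H}_{nf}$ of a Baire space and transports the completeness back using~(\ref{extension}) and~(\ref{restriction})--(\ref{restoperations}); or one works directly with the endpoint functions $\underline{f}_n\le\overline{f}_n$, using that $\underline{f}_n$ is lsc and $\overline{f}_n$ usc and that the Cauchy condition forces monotone-style convergence of these, then takes the H-hull of the limit. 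Handling the $\pm\infty$ bookkeeping in $\ominus$ and in the definition of $\rho$ near $\Gamma_{f_n}$ is a secondary technical nuisance, but it is controlled by nearly-finiteness together with~(\ref{ident}).
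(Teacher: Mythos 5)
There is a genuine gap, and it is the one you yourself flag: your construction of the limit lives on the set $G=X\setminus\bigcup_n(W_{f_n}\cup\Gamma_{f_n})$, and without a Baire hypothesis on $X$ this set need not be dense (it can even be empty, e.g.\ for $X=\mathbb{Q}$ a countable union of closed nowhere dense sets can exhaust $X$). Everything downstream — the pointwise limit $h$ on $G$, the ``H-hull'' of $h|_G$, the use of (\ref{ident}) and (\ref{extension}) to propagate bounds from $G$ — presupposes that density, so as written the argument only proves completeness for Baire $X$, which is exactly the restriction the paper is at pains to remove (see the discussion after the interval operations, items (i)--(ii)). The two remedies you sketch (embedding into a Baire-ification, or an unspecified ``monotone-style convergence'' of the endpoint functions) are programmes rather than proofs; neither is carried out, and the first in particular would need a nontrivial isometric-embedding result that is not available in the excerpt.

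The paper's proof avoids pointwise reasoning altogether and this is the missing idea. It uses two ingredients: (a) $\mathbb{H}_{nf}(X)$ is a Dedekind complete lattice under the pointwise order (a known result quoted from \cite{AngRos,AngRos2}, valid for arbitrary completely regular $X$), and (b) the order characterization of the metric, namely $\rho(f,g)<\varepsilon$ iff $g\ominus\frac{\varepsilon}{1-\varepsilon}\leq f\leq g\oplus\frac{\varepsilon}{1-\varepsilon}$ (formula (\ref{convorder})). Given a Cauchy net $(f_\lambda)$ one sets $\phi_\lambda=\inf_{\nu\geq\lambda}f_\nu$, $\psi_\lambda=\sup_{\nu\geq\lambda}f_\nu$ (these exist by Dedekind completeness, once the net is order-bounded via (b)), shows $0\leq\psi_\lambda\ominus\phi_\lambda\leq\frac{2\varepsilon}{1-\varepsilon}$ eventually, concludes $\phi:=\sup_\lambda\phi_\lambda=\inf_\lambda\psi_\lambda$, and then squeezes $f_\lambda$ between $\phi_\lambda$ and $\psi_\lambda$ to get $\rho(f_\lambda,\phi)\to 0$. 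In other words, the limit is manufactured by lattice suprema/infima, not by pointwise limits on a dense set, which is precisely how the Baire obstruction you identified is sidestepped. If you want to salvage your approach, you would essentially have to re-prove the Dedekind completeness used in (a); invoking it directly and replacing your pointwise limit by the order-theoretic $\liminf$/$\limsup$ construction above turns your sketch into the paper's proof.
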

\begin{proof}
Verifying that $\rho$ satisfies the axioms of a metric uses
standard techniques and will be omitted. We will prove the
completeness by using that $\mathbb{H}_{nf}(X)$ is a Dedekind
complete latticet with respect to the usual point-wise order, see
\cite{AngRos,AngRos2}. Furthermore, $\mathbb{H}_{nf}(X)$ is also a
vector lattice with respect to the addition $\oplus$ and the
multiplication by constants. This can be shown similarly to
\cite{Oconv}, where the case of finite H-continuous functions is
considered. The following implication for any $\varepsilon\in
(0,1)$ is easy to obtain and is useful in the proof
\begin{equation}\label{convorder}
\rho(f,g)<\varepsilon \Longleftrightarrow
\frac{-\varepsilon}{1-\varepsilon}\leq f\ominus g \leq
\frac{\varepsilon}{1-\varepsilon} \Longleftrightarrow
g\ominus\frac{\varepsilon}{1-\varepsilon}\leq f \leq
g\oplus\frac{\varepsilon}{1-\varepsilon}.
\end{equation}
Let $(f_\lambda)_{\lambda\in\Lambda}$ be a Cauchy net on
$\mathbb{H}_{nf}(X)$. There exists $\mu\in\Lambda$ such that
$\rho(f_\lambda,f_\mu)<0.5$. Then by (\ref{convorder}) the net
$(f_\lambda)_{\lambda\geq \mu}$ is bounded. Due to the Dedekind
order completeness of $\mathbb{H}_{nf}(X)$ the following infima
and suprema exist
\begin{eqnarray*}
\phi_\lambda&=&\inf\{f_\nu:\nu\geq\lambda\}\ , \ \ \lambda\geq\mu, \\
\psi_\lambda&=&\sup\{f_\nu:\nu\geq\lambda\}\ , \ \
\lambda\geq\mu,\\
\phi&=&\sup\{\phi_\lambda:\lambda\geq\mu\}\\
\psi&=&\inf\{\psi_\lambda:\lambda\geq\mu\}
\end{eqnarray*}
Let $\varepsilon\in(0,1)$. There exists $\lambda_\varepsilon$ such
that $\rho(f_\lambda,f_\nu)<\varepsilon$,
$\lambda,\nu\geq\lambda_\varepsilon$. It follows from
(\ref{convorder}) that
\[
f_{\lambda_\varepsilon}\ominus\frac{\varepsilon}{1-\varepsilon}\leq
f_\nu\leq
f_{\lambda_\varepsilon}\oplus\frac{\varepsilon}{1-\varepsilon},\
\nu\geq\lambda_\varepsilon.
\]
Therefore,
\[
f_{\lambda_\varepsilon}\ominus\frac{\varepsilon}{1-\varepsilon}\leq
\phi_\lambda\leq\psi_\lambda\leq
f_{\lambda_\varepsilon}\oplus\frac{\varepsilon}{1-\varepsilon}, \
\lambda\geq\lambda_\varepsilon,
\]
which implies
\begin{equation}\label{theoMcomplete1}
0\leq\psi_\lambda\ominus\phi_\lambda\leq
\frac{2\varepsilon}{1-\varepsilon},\
\lambda\geq\lambda_\varepsilon.
\end{equation}
Taking a supremum on $\lambda$ and considering that $\varepsilon$
is arbitrary we obtain $\phi=\psi$.

Further, from the inequalities
\begin{eqnarray*}
&&\phi_\lambda\leq f_\lambda\leq \psi_\lambda\\
&&\phi_\lambda\leq \phi\leq \psi_\lambda
\end{eqnarray*}
and (\ref{theoMcomplete1}) we obtain
\[
|f_\lambda\ominus\phi|\leq |\psi_\lambda\ominus\phi_\lambda|\leq
\frac{2\varepsilon}{1-\varepsilon},\
\lambda\geq\lambda_\varepsilon.
\]
or equivalently $\rho(f_\lambda,\phi)<\varepsilon$.  This implies
that $\lim\limits_\lambda f_\lambda=\phi$, which completes the
proof.
\end{proof}

Since the ring $\mathbb{H}_{nf}(X)$ is rationally complete, see
Theorem \ref{theoHRcompl}, as well as complete with respect to the
metric (\ref{rhoext}), see Theorem \ref{theoMcomplete}, it
contains all rings of quotients of $C(X)$ as well as their metric
completions. In particular, representation of the metric
completion $\overline{\mathcal{Q}(C(X))}$ of $\mathcal{Q}(C(X))$
is given in the next theorem.

\begin{theorem}\label{theoMcomplC}
The completion of the ring of quotients of $C(X)$ is
$\mathbb{H}_{nf}(X))$, that is,
\[
\overline{\mathcal{Q}(C(X))}=\mathbb{H}_{nf}(X)
\]
\end{theorem}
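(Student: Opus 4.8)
The plan is to combine two facts already at hand. By Theorem~\ref{theoMcomplete} the space $(\mathbb{H}_{nf}(X),\rho)$ is complete, and by Theorem~\ref{theoRatExt}(a) we have $\mathcal{Q}(C(X))=\mathbb{H}_{nd}(X)$, a subring of $\mathbb{H}_{nf}(X)$ on which $\rho$ restricts to the metric in question. Hence the completion of $\mathcal{Q}(C(X))$ is exactly the closure of $\mathbb{H}_{nd}(X)$ inside $\mathbb{H}_{nf}(X)$, and it suffices to prove that $\mathbb{H}_{nd}(X)$ is $\rho$-dense in $\mathbb{H}_{nf}(X)$.

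To this end, fix $f=[\underline{f},\overline{f}]\in\mathbb{H}_{nf}(X)$ and $\varepsilon>0$. Since $f$ is H-continuous it is quasi-minimal, so by Theorem~\ref{tqminchar} the set $W_{f,\varepsilon}$ is closed and nowhere dense; and $\Gamma_f$ is closed and nowhere dense because $f$ is nearly finite. Therefore $D:=X\setminus(W_{f,\varepsilon}\cup\Gamma_f)$ is open and dense, and on $D$ the function $f$ is finite with $\overline{f}-\underline{f}<\varepsilon$, i.e. $\overline{f}(x)-\varepsilon<\underline{f}(x)$ for $x\in D$. The crucial step is to produce a continuous real function $c$, defined on a dense open subset $D'$ of $D$, with
\[
\overline{f}(x)-\varepsilon<c(x)<\underline{f}(x),\qquad x\in D'.
\]
For $q\in\mathbb{Q}$ put $O_q=\{x\in D:\overline{f}(x)-\varepsilon<q<\underline{f}(x)\}$; each $O_q$ is open (as $\overline{f}$ is upper and $\underline{f}$ lower semicontinuous) and the family $\{O_q\}_{q\in\mathbb{Q}}$ covers $D$. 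Enumerating $\mathbb{Q}=\{q_1,q_2,\dots\}$ and setting $O_n'=O_{q_n}\setminus\overline{O_{q_1}'\cup\dots\cup O_{q_{n-1}}'}$ produces pairwise disjoint open sets whose union $D'$ is dense in $D$, hence dense and open in $X$; the function $c$ equal to $q_n$ on $O_n'$ is then well defined, continuous on $D'$, and satisfies the displayed inequalities.

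Now let $g\in\mathbb{H}_{nf}(X)$ be the unique H-continuous extension of $c$ (an element of $\mathbb{H}_{nf}(D')$) guaranteed by (\ref{extension}). Since $g$ coincides with the finite point function $c$ on the dense open set $D'$, both $W_g$ and $\Gamma_g$ are contained in the closed nowhere dense set $X\setminus D'$; in particular $\overline{W_g}$ is nowhere dense, so $g\in\mathbb{H}_{nd}(X)=\mathcal{Q}(C(X))$. On $D'$, using (\ref{opercoincide}) and (\ref{restoperations}), $(f\ominus g)|_{D'}=f|_{D'}+(-c)=[\underline{f}-c,\overline{f}-c]\subseteq[0,\varepsilon]$ by the choice of $c$. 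As $f\ominus g$ is H-continuous and $D'$ is dense, this two-sided bound persists on all of $X$ (an H-continuous function is determined by, and inherits order bounds from, its values on a dense set; cf. (\ref{ident}) and the lattice structure of $\mathbb{H}_{nf}(X)$). Hence $|f\ominus g|\le\varepsilon$ on $X\setminus(\Gamma_f\cup\Gamma_g)$, so $\rho(f,g)\le\varepsilon/(1+\varepsilon)<\varepsilon$. This gives the density, whence $\overline{\mathcal{Q}(C(X))}=\mathbb{H}_{nf}(X)$.

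The point at which the argument could get stuck is the construction of $c$: over a normal space this would be the classical Kat\v{e}tov--Tong insertion of a continuous function between the upper semicontinuous $\overline{f}-\varepsilon$ and the lower semicontinuous $\underline{f}$, but $D$ is only completely regular, so no such insertion need exist on all of $D$. The resolution is that membership in $\mathbb{H}_{nd}(X)$ only demands control on a dense open set, so it is enough to insert $c$ on the dense open $D'$, and there the elementary disjointification of the cover $\{O_q\}$ does the job without any normality assumption. A minor but necessary supporting point is the passage from the estimate on $D'$ to the estimate on $X$, which relies on the rigidity of H-continuous functions with respect to dense subsets.
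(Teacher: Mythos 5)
Your proposal is correct and follows the paper's overall strategy --- invoke Theorem~\ref{theoMcomplete} for completeness and reduce everything to the density of $\mathbb{H}_{nd}(X)=\mathcal{Q}(C(X))$ in $\mathbb{H}_{nf}(X)$ --- but it differs at the key technical step. The paper fixes $n$, passes to the open dense set $X\setminus(W_{f,1/n}\cup\Gamma_f)$, and invokes the Hahn (Kat\v{e}tov--Tong) insertion theorem to slip a continuous $f_n$ between the upper semicontinuous $\overline{f}-\tfrac1n$ and the lower semicontinuous $\underline{f}+\tfrac1n$ on that set, then extends $f_n$ H-continuously to $X$; you instead avoid insertion altogether by covering $D=X\setminus(W_{f,\varepsilon}\cup\Gamma_f)$ with the open sets $O_q=\{\overline{f}-\varepsilon<q<\underline{f}\}$, disjointifying to get a dense open $D'\subseteq D$, and taking the locally constant (hence continuous) function $c=q_n$ on each piece. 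Your route buys something real: the insertion theorem needs normality of the subspace, and an open subspace of a merely completely regular $X$ (the setting the paper insists on) need not be normal, so the paper's appeal to ``the Theorem of Han'' is not justified in general, whereas your construction works without any normality and exploits exactly what is needed, namely that membership in $\mathbb{H}_{nd}(X)$ and the estimate $\rho(f,g)<\varepsilon$ only require control on a dense open set. Both proofs share the remaining (mildly glossed) step that an H-continuous function whose values lie in $[0,\varepsilon]$ on a dense set has values in $[0,\varepsilon]$ everywhere outside the infinite sets --- the paper uses this implicitly when bounding $\rho(f,f_n)$ by the supremum over $X\setminus(W_{f,1/n}\cup\Gamma_f)$, and you at least name it; it follows from the way the unique extension in (\ref{extension}) is built from suprema and infima of values on the dense set, so it is a fair citation rather than a gap.
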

\begin{proof}
Since the completeness of $\mathbb{H}_{nf}(X))$ has already been
proved, we only need to show that $\mathcal{Q}(C(X))$ is dense in
$\mathbb{H}_{nf}(X)$. Using the representation of
$\mathcal{Q}(C(X))$ given in Theorem \ref{theoRatExt},
equivalently, we need to show that $\mathbb{H}_{nd}(X)$ is dense
in $\mathbb{H}_{nf}(X))$. Let
$f=[\underline{f},\overline{f}]\in\mathbb{H}_{nf}(X)$ and let
$n\in\mathbb{N}$. We have
\begin{equation}\label{theoMcomplC1}
\overline{f}(x)-\frac{1}{n}\leq\underline{f}(x)+\frac{1}{n}\ ,\ \
x\in X\setminus(W_{f,\frac{1}{n}}\cup\Gamma_f).
\end{equation}
Since the function on the left side of the inequality
(\ref{theoMcomplC1}) is upper semi-continuous while the function
on the right side is lower semi-continuous by the well known
Theorem of Han there exists $f_n\in
C(X\setminus(W_{f,\frac{1}{n}}\cap\Gamma_f))$ such that
\begin{equation}\label{theoMcomplC2}
\overline{f}(x)-\frac{1}{n}\leq
f_n(x)\leq\underline{f}(x)+\frac{1}{n}\ ,\ \ x\in
X\setminus(W_{f,\frac{1}{n}}\cup\Gamma_f).
\end{equation}
The set $X\setminus(W_{f,\frac{1}{n}}\cup\Gamma_f)$ is an open and
dense subset of $X$ because $W_{f,\frac{1}{n}}$ and $\Gamma_f$ are
closed nowhere dense sets. Hence $f_n$ can be extended in a unique
way to $X$ so that it is H-continuous on $X$. Since this extended
function may assume interval values or values involving
$\pm\infty$ only on the closed nowhere dense set
$W_{f,\frac{1}{n}}\cup\Gamma_f$ we have
$f_n\in\mathbb{H}_{nd}(X)$. Moreover, it follows from the
inequality (\ref{theoMcomplC2}) that
\[
\rho(f,f_n)\leq\sup_{x\in
X\setminus(W_{f,\frac{1}{n}}\cup\Gamma_f)}|f\ominus
f_n|\leq\frac{1}{n}.
\]
Hence, $\lim\limits_{n\to\infty}f_n=f$. Therefore
$\mathbb{H}_{nd}(X)$ is dense in $\mathbb{H}_{nf}(X)$.
\end{proof}

\section{Conclusion}

This paper gives an application of a class of interval functions,
namely, the Hausdorff continuous function, to the representation
of the rational extensions of $C(X)$ as well as their metric
completions. Traditionally, Interval Analysis is considered as
part of Numerical Analysis due to its major applications to
scientific computing. However, the study of the order, topological
and algebraic structure of the spaces of interval functions led to
some significant applications to other areas of mathematics, e.g.
Approximation Theory \cite{Sendov}, Analysis of PDEs
\cite{AngRos,AngRos2,JanHarm}, Real Analysis \cite{QM2004,Oconv}.
The results presented here are in the same line of applications.
It is shown that all rings of quotients of $C(X)$ and their matric
completions are subrings of the ring $\mathbb{H}_{nf}(X)$ of
nearly finite Hausdorff continuous functions. Thus,
$\mathbb{H}_{nf}(X))$ is the largest set of functions to which the
ring and metric structure of $C(X)$ can be extended in an
unambiguous way.

\end{document}